\newcommand{\repeattheorem}[1]{%
	\begingroup
	\renewcommand{\thetheorem}{\ref{#1}}%
	\expandafter\expandafter\expandafter\theorem
	\csname reptheorem@#1\endcsname
	\endtheorem
	\endgroup
}
\xdef\csname reptheorem@#1\endcsname{%
		\unexpanded\expandafter{\BODY}%
	}%
\unskip\label{#1}\endtheorem
\newcommand{\RR}{\mathbb{R}}
\newcommand{\NN}{\mathbb{N}}
\newcommand{\ZZ}{\mathbb{Z}}
\newcommand{\QQ}{\mathbb{Q}}
\newcommand{\PP}{\mathbb{P}}
\newcommand{\PrA}{\mathop{\mathbf{PrA}}\nolimits}
\newcommand{\PA}{\mathop{\mathbf{PA}}\nolimits}
\renewcommand{\epsilon}{\varepsilon}
\renewcommand{\phi}{\varphi}
\newcommand{\sref}[2]{\hyperref[#2]{#1 \ref*{#2}}}
\newcommand{\dref}[2]{\hyperref[#2]{ #1 }}
\newcommand{\eqdef}{\stackrel{\mbox{\tiny\rm def}}{=}}\renewcommand{\to}{\rightarrow}
\newcommand{\ra}{\rightarrow}
\newcommand{\Lra}{\Leftrightarrow}
\newcommand{\degr}{\mathsf{deg}}
\spnewtheorem{hyp}{Conjecture}[section]{\bfseries}{\itshape}
\spnewtheorem{ex}{Example}{\bfseries}{\itshape}
\newcommand{\fcomm}[1]{{\color{blue} Fedor: #1}}
\begin{document}
	\author{Fedor Pakhomov$^{12}$ and Alexander Zapryagaev$^3$}
	\title{Linear Orders in Presburger Arithmetic}
	\institute{${}^1$Department of Mathematics, University of Ghent, Krijgslaan 297-S8, 9000 Ghent, Belgium\\ ${}^2$Steklov Mathematical Institute of Russian Academy of Sciences, Gubkina 8, 119991 Moscow, Russia\\ ${}^3$Laboratory of Theoretical Computer Science, National Research University Higher School of Economics, 20, Myasnitskaya Str., 101000 Moscow, Russia}
	
	\maketitle
	
	\begin{abstract}
		We prove the linear orders first-order definable in the standard model $(\ZZ;<,+)$ of Presburger arithmetic are exactly those that are $(\ZZ;<,+)$-definably embeddable into the lexicographic ordering on $\ZZ^n$ for some $n$.
	\end{abstract}
	
	\section{Introduction}
	
	Presburger arithmetic $\PrA$ is the elementary theory of integers with order and addition. Unlike Peano Arithmetic $\PA$, it is complete, decidable and admits quantifier elimination in the extension of its language by constants $0,1$ and binary relations of congruence modulo $p$, $p\ge 2$ \cite{presburger}.
	
	
        A linear order $L$ is called definable in a structure $M$ if both its domain $D_L$ and its comparison relation $<_L\,\subseteq D_L\times D_L$ are first-order $M$-definable sets. Definability of linear orders (or of specific classes of linear orders) is a natural problem that has been previously studied for various structures and classes of structures (\cite{braudcarayol,khoussainovrubinstephan,ramakrishnan}). We aim to explore the definability of linear orders in Presburger arithmetic.

	Presburger arithmetic is not a sequential theory, i.\,e., it cannot encode tuples of integers by single integers. As such, it requires distinction between one-dimensional definability (each element of the order is represented by a single integer) and multi-dimensional definability (each element of the order is represented by a tuple of $m$ integers for a fixed $m\ge 2$).

    \emph{Scattered} orders are orders without dense suborders. A linear order has the Hausdorff rank $\le \alpha$ ($\alpha\in\mathsf{Ord}$) if it is embeddable into $\ZZ^\alpha \cdot n$ for some finite ordinal $n$ (slightly different definitions of rank could be found in the literature, but they lead to the value of rank varying at most by one). The classical result of Hausdorff is that a linear order is scattered iff it is of rank $\alpha$, for some $\alpha$.
        
    In a preceding paper \cite{pakhomovzapryagaev2020} the authors proved that any linear order definable in $(\ZZ,<,+)$ is a \emph{scattered} linear order of finite Hausdorff rank. 
	
	\begin{reptheorem}{ordering}[\cite{pakhomovzapryagaev2020}]
		Suppose a linear order $L$ is $m$-dimensionally definable in $(\ZZ,<,+)$. Then $L$ is scattered, and the Hausdorff rank of $L$ is at most $m$.
    \end{reptheorem}

    However, not all orders of finite Hausdorff rank are definable in $(\ZZ,<,+)$: there are continuum many non-isomorphic linear orders of rank $1$, and only at most countably many of them are definable in $(\ZZ,<,+)$. So \sref{Theorem}{ordering} only gives a necessary condition for a linear order to be definable in $(\ZZ,<,+)$. In this paper we give a complete characterization of the class of linear orders definable in $(\ZZ,<,+)$:
    
	\begin{reptheorem}{main}
		Any linear order definable in $(\ZZ,<,+)$ is $(\ZZ,<,+)$-definably embeddable into the lexicographic ordering on $\ZZ^n$.
    \end{reptheorem}

    This, of course, implies that any definable linear order in $(\ZZ,<,+)$ is isomorphic to a restriction of $\ZZ^n$ to a Presburger-definable set.

    In fact, the same holds in all models of Presburger arithmetic:
        \begin{reptheorem}{non-standard}
            If $M\models \PrA$, then any linear order $L$ definable in $M$ is $M$-definably embeddable into the lexicographic ordering on $M^n$.
        \end{reptheorem}
        
	
	
	\sref{Theorem}{non-standard} may be compared to results about certain classes of $o$-minimal structures establishing the definable embeddability of definable linear orders into the lexicographic order on the underlying structure \cite{onshuussteinhorn,ramakrishnan}. So despite not being $o$-minimal, with regard to definability of linear orders Presburger arithmetic has properties analogous to $o$-minimal theories.
	
	
	The work is organized as follows. Section 2 introduces the necessary background in Presburger arithmetic. In Section 3, we discuss linear orders with finite Hausdorff rank. Section 4 introduces piecewise polynomial functions and section cardinality functions. In Section 5, we introduce the notion of definable $N$-connectedness and prove a result about uniform definable $N$-connectedness of a definable family of individually connected sets, which is of interest in its own right. In Section 6, we prove \sref{Theorem}{main}.
	
	\section{Presburger arithmetic}
	This section gives necessary background on Presburger arithmetic.
	
	\begin{definition}
        \emph{Presburger arithmetic} $\PrA$ is the elementary theory of the structure $(\ZZ;<,+)$.
	\end{definition}
	
	Note that constants $0$, $1$ and the modulo comparison predicates $x\equiv_p y$ for all $p\ge 2$ are definable in $(\ZZ,<,+)$. In the extended language $(0,1,+,<,\{\equiv_p\}_{p\in\PP})$, Presburger arithmetic admits quantifier elimination \cite{presburger}. $\PrA$ is complete and decidable.
	
	Sometimes in the literature, Presburger arithmetic is considered as the elementary theory of the structure $(\NN;+)$ instead of $(\ZZ;<,+)$. These two structures are definitionally equivalent. Indeed, consider the bijection $f\colon\ZZ\to\NN$ mapping each negative integer $a$ to $-2a+1$ and each non-negative integer $a$ to $2a$. It is easy to see that the relation $(\NN,+)\models f(x)+f(y)=f(z)$ is definable in $(\ZZ;<,+)$, while the relations $(\ZZ,+)\models f^{-1}(x)<f^{-1}(y)$ and $(\ZZ,+)\models f^{-1}(x)+f^{-1}(y)=f^{-1}(z)$ are definable in $(\NN,+)$.
	
	
	
	For almost the entirety of this paper we will be working with the definable sets, functions, linear orders, etc. in $(\ZZ,<,+)$. Hence when we do not specifically mention where some set (or other object) is definable, we mean definability in $(\ZZ,<,+)$. A useful characterization of definable sets is provided by \sref{Theorem}{fund} below. It is obtained by combining the results of Ginsburg and Spanier \cite{ginsburgspanier} with those of Ito \cite{ito}.
	
	\begin{definition}
		For vectors $\vec{c},\vec{p_1},\ldots,\vec{p_n}\in\ZZ^m$ we call the set $J=\{\vec{c}+\sum k_i\vec{p_i}\mid k_i\in\NN\}\subseteq \ZZ^m$ a \emph{lattice} (or \emph{linear set}) generated by $\{\vec{p_i}\}$ from $\vec{c}$. If $\{\vec{p_1},\ldots,\vec{p_n}\}$ are linearly independent, we call this set an $n$-dimensional \emph{fundamental lattice}. A fundamental lattice of dimension $0$ is a single point $\vec{c}\in\ZZ^m$.
	\end{definition}
     
	The vectors $\vec{p_1},\ldots,\vec{p_n}$ are called the \emph{generating} vectors of $J$, and $\vec{c}$ its \emph{origin}. For a vector $\vec{y}\in J$, if $\vec{y}=\vec{c}+k_1\vec{p_1}+\ldots+k_n\vec{p_n}$, we say that the values $(k_1,\ldots,k_n)$ are the \emph{coordinates} of $\vec{y}$ in $J$.
	
	If $\vec{y}$ is expressed in the form $\vec{q}+l_1\vec{p_1}+\ldots+l_n\vec{p_n}$ for some vector $\vec{q}\in J$, we call $(l_1,\ldots,l_n)$ \emph{the coordinates of $\vec{y}$ with respect to $\vec{q}$} (if $\vec{q}$ is not the origin, $l_i$ might be negative).
        
    \begin{definition}
        We call a subset of $\ZZ^n$ a \emph{semilinear set} if it is a disjoint union of finitely many fundamental lattices.
    \end{definition}
        	
	\begin{theorem}[\cite{ginsburgspanier,ito}]\label{fund}
        A subset of $\ZZ^n$ is $\PrA$-definable iff it is a \emph{semilinear set}.
	\end{theorem}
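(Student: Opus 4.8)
The statement splits into two implications, one of which is routine. For the direction that every semilinear set is definable, I would observe that a single fundamental lattice $J=\{\vec c+\sum_{i=1}^{n}k_i\vec p_i:k_i\in\NN\}\subseteq\ZZ^m$ is defined by the formula $\exists k_1\cdots\exists k_n\bigl(\bigwedge_{i}k_i\ge 0\ \wedge\ \bigwedge_{j=1}^{m}x_j=c_j+\sum_{i}k_i(p_i)_j\bigr)$: each coordinate equation is linear with fixed integer coefficients and hence expressible in $(\ZZ,<,+)$, while $\NN=\{x:x\ge 0\}$ is definable. Since a finite union of definable sets is definable, every semilinear set is definable. The work is entirely in the converse.

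For the converse I would first pass to a normal form. By quantifier elimination for $\PrA$ in the extended language \cite{presburger}, every definable subset of $\ZZ^n$ is defined by a quantifier-free formula whose atoms are linear constraints $\sum_i a_i x_i=b$, $\sum_i a_i x_i<b$ and congruences $\sum_i a_i x_i\equiv_p b$. Putting this into negation normal form and using that the negation of each atom is again a finite disjunction of atoms of the same three shapes — the complement of a closed half-space is an open half-space, the complement of a hyperplane is a union of two open half-spaces, and the complement of a congruence class modulo $p$ is the union of the remaining $p-1$ classes — I can write the definable set as a finite union of finite intersections of single-atom sets. Temporarily calling a finite union of linear sets (with possibly dependent generators, possibly overlapping) a \emph{finite-linear} set, it then suffices to show that each single atom is finite-linear and that finite-linear sets are closed under finite intersection; closure under finite union is built into the definition.

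The atoms are handled by exhibiting explicit generators: a half-space or a single congruence class in $\ZZ^m$ decomposes into finitely many fundamental lattices obtained by splitting $\ZZ^m$ into orthants and intersecting with the defining condition. The heart of the argument, and the step I expect to be the main obstacle, is closure under intersection. Given two fundamental lattices $\{\vec c+\sum_i k_i\vec p_i\}$ and $\{\vec d+\sum_j l_j\vec q_j\}$, a point lies in both exactly when the linear Diophantine system $\vec c+\sum_i k_i\vec p_i=\vec d+\sum_j l_j\vec q_j$ has a solution in nonnegative integers $(k_i,l_j)$. By the finite-basis theorem for nonnegative integer solutions of a linear system (the discrete analogue of Gordan's lemma, furnishing a finite Hilbert basis), the set of such solution tuples is itself finite-linear; mapping it forward by $\vec x=\vec c+\sum_i k_i\vec p_i$ yields a finite-linear description of the intersection. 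This finite-generation fact carries the genuine content, and from it closure under arbitrary intersection follows by distributing. At this point, following Ginsburg and Spanier \cite{ginsburg}, the classes of definable and finite-linear subsets of $\ZZ^n$ coincide; in particular the finite-linear sets inherit closure under complement for free, since definable sets are closed under complement.

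It remains to reach the refined form in the definition — a \emph{disjoint} union of \emph{fundamental} lattices — rather than an arbitrary finite union of linear sets. Here I would invoke the normalization of Ito \cite{ir}: a linear set whose generators are linearly dependent is rewritten as a finite union of lattices with linearly independent generators, and an arbitrary finite family $A_1,\ldots,A_k$ of fundamental lattices is made disjoint by replacing each $A_k$ with $A_k\setminus\bigcup_{i<k}A_i$, an operation that stays within the now Boolean-closed class of finite-linear sets and, after re-normalizing each difference, lands in a disjoint union of fundamental lattices. Combining this with the easy direction yields the stated equivalence.
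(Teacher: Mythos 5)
The paper does not prove this theorem at all: it is imported as a black box from Ginsburg--Spanier \cite{ginsburg} and Ito \cite{ir}, which are precisely the two sources your sketch reconstructs --- quantifier elimination down to linear and congruence atoms, closure of finite unions of linear sets under intersection via the finite-basis (Gordan/Hilbert) theorem for nonnegative integer solutions of linear Diophantine systems, and Ito's normalization into disjoint unions of lattices with linearly independent generators. Your outline is correct, correctly locates the genuine content in the Hilbert-basis and normalization steps, and there is no in-paper argument to compare it against.
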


    \begin{definition}
        For a fundamental lattice $J$ generated by $\vec{p}_1,\ldots,\vec{p}_n$ from $\vec{c}$, we call a function $f\colon J\to \ZZ$ \emph{linear} if $$f(\vec{c}+x_1\vec{p}_1+\ldots+x_n\vec{p}_n)=a_0+a_1x_1+\ldots+a_nx_n$$ for some $a_0,\ldots,a_n\in\ZZ$.
    \end{definition}
        
    \begin{definition}
        For a definable set $A\subseteq \ZZ^n$, a function $f\colon A\to \ZZ$ is called \emph{piecewise linear} if there is a decomposition of $A$ into a disjoint union of finitely many fundamental lattices $A=J_1\sqcup\ldots\sqcup J_k$ such that the restriction of $f$ to each $J_i$ is linear.
    \end{definition}

    \begin{theorem}[{\cite[Theorem~2.2]{pakhomovzapryagaev2020}}]\label{jetz}
	A function $f\colon A\ra\ZZ$ where $A\subseteq \ZZ^n$ is definable iff it is piecewise linear.
	\end{theorem}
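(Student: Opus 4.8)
The plan is to prove both directions by relating $f$ to the geometry of its graph $G=\{(\vec{y},f(\vec{y})):\vec{y}\in A\}\subseteq\ZZ^{n+1}$ through \sref{Theorem}{fund}, the crucial tool being the projection $\pi\colon\ZZ^{n+1}\to\ZZ^n$ that forgets the last coordinate. The overarching idea is that $G$ is a graph, so $\pi$ is injective on $G$, and this injectivity is exactly what lets semilinear structure pass back and forth between $G$ and $A$.

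For the easy direction ($\Leftarrow$), suppose $f$ is piecewise linear with decomposition $A=J_1\sqcup\cdots\sqcup J_k$. If a piece $J_i$ is generated by $\vec{p}_1,\ldots,\vec{p}_d$ from $\vec{c}$ and $f(\vec{c}+\sum_l x_l\vec{p}_l)=a_0+\sum_l a_l x_l$ on it, then the graph of $f\restriction J_i$ is $\{(\vec{c},a_0)+\sum_l x_l(\vec{p}_l,a_l):x_l\in\NN\}$. Since the $\vec{p}_l$ are linearly independent, so are the lifted generators $(\vec{p}_l,a_l)\in\ZZ^{n+1}$, so this set is a fundamental lattice. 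Hence $G$ is a finite disjoint union of fundamental lattices, i.e.\ semilinear, and therefore definable by \sref{Theorem}{fund}; a function with definable graph is definable.

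For the main direction ($\Rightarrow$), suppose $f$ is definable, so $G$ is definable and, by \sref{Theorem}{fund}, semilinear: $G=L_1\sqcup\cdots\sqcup L_m$ with each $L_j$ a fundamental lattice in $\ZZ^{n+1}$. I would first show that each image $\pi(L_j)$ is again a fundamental lattice. Writing $L_j=\{\vec{e}+\sum_i k_i\vec{q}_i:k_i\in\NN\}$, the claim reduces to the linear independence of the projected generators $\pi(\vec{q}_i)$: a nontrivial rational (hence integral) dependence $\sum_i c_i\pi(\vec{q}_i)=0$ would, after splitting each $c_i$ into positive and negative parts, produce two points of $L_j$ with the same $\pi$-image, and these points are distinct because the $\vec{q}_i$ themselves are independent; this contradicts the injectivity of $\pi$ on $G\supseteq L_j$. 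Thus $\pi$ maps $L_j$ bijectively onto the fundamental lattice $\pi(L_j)$ generated by the $\pi(\vec{q}_i)$ from $\pi(\vec{e})$, matching up coordinates $(k_i)$ on the two lattices.

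It then remains to read off linearity and disjointness. On $\pi(L_j)$ the value $f\bigl(\pi(\vec{e})+\sum_i k_i\pi(\vec{q}_i)\bigr)$ is precisely the last coordinate of $\vec{e}+\sum_i k_i\vec{q}_i$, namely $(\text{last coord of }\vec{e})+\sum_i k_i(\text{last coord of }\vec{q}_i)$, which is linear in the $k_i$; hence $f\restriction\pi(L_j)$ is linear. The sets $\pi(L_j)$ are pairwise disjoint, since a common point would lift to points of $L_i$ and $L_j$ with equal $\pi$-image, forcing them to coincide by injectivity of $\pi$ on $G$ and so $L_i\cap L_j\ne\es$. As $\pi(G)=A$, we obtain the decomposition $A=\pi(L_1)\sqcup\cdots\sqcup\pi(L_m)$ witnessing that $f$ is piecewise linear. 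The only genuinely delicate step is the preservation of linear independence under projection, and this is exactly where the function-graph hypothesis does the work; the rest is bookkeeping.
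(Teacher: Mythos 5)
Your proof is correct and follows essentially the same route as the paper's: decompose the definable graph $G$ into fundamental lattices via \sref{Theorem}{fund} and project along the last coordinate, with the reverse direction obtained by lifting generators. The only difference is that you carefully justify the step the paper dismisses as ``clearly'' --- that injectivity of the projection on a function graph forces the projected generators to stay linearly independent and the projected lattices to stay disjoint --- which is a welcome addition, not a deviation.
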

        \begin{proof}
		The definability of all piecewise linear functions in Presburger arithmetic is obvious. A function $f\colon A\to\ZZ$ is definable if and only if its graph
		\begin{center}
		$G=\{(a_1,\ldots,a_n,f(a_1,\ldots,a_n))\mid (a_1,\ldots,a_n)\in A\}$
	\end{center}
	\noindent
	is definable. We represent $G$ as a union of fundamental lattices $G_1\sqcup\ldots\sqcup G_k$. For each $1\le i\le k$ let $G_i'$ be the projection of $G_i$ along the last coordinate: $$G_i'\eqdef\{(a_1,\ldots,a_n)\mid\exists a_{n+1}\left((a_1,\ldots,a_n,a_{n+1})\in G_i\right)\}.$$

    Notice that the projections of the generating vectors of $G_i$ along the last coordinate have to form a linearly independent set of vectors. Indeed, assume some non-trivial linear combination of them (with coefficients from $\QQ$) sums up to $0$. We first modify it by multiplying the combination by an appropriate positive natural number, to ensure that all the coefficients are integers. Then we consider some some linear combination of the same vectors with sufficiently large natural number coefficients and the sum of this new combination and the previous one. Thus we obtain two different linear combinations of the projections of the generating vectors of $G_i$ summing up to the same element of $G_i'$. But then the corresponding linear combinations of the generating vectors of $G_i$ would be producing two different vectors with the same projection along the last coordinate, contradicting the fact that $G$ was a graph of a function.
        
        Thus, all $G_i'$ are fundamental lattices. Clearly, the restriction of the function $f$ on any $G_i'$ is linear.
	\qed\end{proof}

	\begin{lemma}\label{expressive}
          For any fundamental lattice  $A=\{\vec{c}+\sum k_i\vec{p_i}\mid k_i\in\NN\}$  the functions returning coordinates $k_1,\ldots,k_n$ of elements $y=\vec{c}+k_1\vec{p_1}+\ldots+k_n\vec{p_n}$ are definable.
    \end{lemma}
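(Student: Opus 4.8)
The plan is to prove each coordinate function definable by exhibiting its graph as a definable set; recall that a function is definable exactly when its graph is. Fix $j$ and regard $k_j$ as a function of $\vec{y}\in A$. The guiding idea is that $k_j$ should be recoverable from $\vec{y}$ by a single linear functional, precisely because the generators are linearly independent. Concretely, I would form the $m\times n$ integer matrix $P$ whose columns are $\vec{p}_1,\ldots,\vec{p}_n$; linear independence gives $\operatorname{rank}P=n$, so the Gram matrix $P^{\mathsf T}P$ is an invertible $n\times n$ integer matrix, and $L=(P^{\mathsf T}P)^{-1}P^{\mathsf T}$ is a rational $n\times m$ matrix with $L\vec{p}_i=\vec{e}_i$. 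Its $j$-th row $L_j$ is then a rational linear functional vanishing on every $\vec{p}_i$ with $i\neq j$ and sending $\vec{p}_j$ to $1$, so that $L_j(\vec{y}-\vec{c})=k_j$ for every $\vec{y}=\vec{c}+\sum_i k_i\vec{p}_i$.

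The main obstacle is that $L_j$ has rational rather than integer coefficients, so $k_j$ is not literally a $\ZZ$-linear combination of the entries of $\vec{y}$ and cannot be written down directly in the Presburger language. I would overcome this by clearing denominators: with $d=\det(P^{\mathsf T}P)\neq 0$, the functional $\tilde L_j=d\,L_j$ has integer coefficients (as $d\,(P^{\mathsf T}P)^{-1}=\operatorname{adj}(P^{\mathsf T}P)$ is an integer matrix), it vanishes on $\vec{p}_i$ for $i\neq j$, and $\tilde L_j(\vec{p}_j)=d$. Hence for $\vec{y}=\vec{c}+\sum_i k_i\vec{p}_i\in A$ we obtain the genuine integer identity $\tilde L_j(\vec{y}-\vec{c})=d\,k_j$. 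The graph of the $j$-th coordinate function is therefore defined by
$$\phi_j(\vec{y},t)\;\equiv\;\vec{y}\in A\ \wedge\ \tilde L_j(\vec{y}-\vec{c})=d\cdot t,$$
in which $\tilde L_j(\vec{y}-\vec{c})$ is a fixed integer linear expression in the coordinates of $\vec{y}$, the term $d\cdot t$ abbreviates the $d$-fold sum $t+\cdots+t$, and $\vec{y}\in A$ is definable since $A$ is a fundamental lattice and hence semilinear, so definable by \sref{Theorem}{fund}.

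Finally I would verify that $\phi_j$ really defines a function on $A$. Uniqueness is immediate: as $d\neq 0$, the equation $d\cdot t=\tilde L_j(\vec{y}-\vec{c})$ pins down at most one $t$. For existence, every $\vec{y}\in A$ has a representation $\vec{c}+\sum_i k_i\vec{p}_i$ with $k_i\in\NN$, unique by linear independence, and $t=k_j$ satisfies the equation by the computation above. Thus $\phi_j$ is the graph of the $j$-th coordinate function, which is accordingly definable, and since $j$ was arbitrary all the coordinate functions $k_1,\ldots,k_n$ are definable.
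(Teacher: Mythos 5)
Your proof is correct, but it takes a more constructive route than the paper. The paper's argument is a two-line observation: the map $f\colon (k_1,\ldots,k_n)\mapsto \vec{c}+k_1\vec{p}_1+\ldots+k_n\vec{p}_n$ is a definable bijection onto $A$ (its graph is cut out by integer linear equations), and the inverse of a definable bijection is automatically definable, since the graph of $f^{-1}$ is just the graph of $f$ with the coordinate blocks swapped. You instead exhibit an explicit integer linear functional $\tilde L_j=\operatorname{adj}(P^{\mathsf T}P)_j P^{\mathsf T}$ recovering $d\cdot k_j$ from $\vec{y}-\vec{c}$, and define the graph by the divisibility-style equation $\tilde L_j(\vec{y}-\vec{c})=d\cdot t$. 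All the steps check out: $P$ has full column rank, so $P^{\mathsf T}P$ is invertible with $d=\det(P^{\mathsf T}P)>0$, clearing denominators yields integer coefficients, and uniqueness of the coordinates follows from linear independence. What your approach buys is an explicit, essentially quantifier-free defining formula for each coordinate function (modulo the formula for $\vec{y}\in A$), which could be useful if one cared about the syntactic complexity of the definition; what it costs is the linear-algebra detour, which the paper's appeal to the general ``definable bijections have definable inverses'' fact makes unnecessary for the purposes of the lemma.
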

    \begin{proof}
        This function is the inverse of the bijection $$f\colon \NN^n\to A,\;\;\;\;\;\;f\colon (k_1,\ldots,k_n)\longmapsto \vec{c}+k_1\vec{p_1}+\ldots+k_n\vec{p_n}.$$
        The lemma follows, since $f$ is clearly definable, and the inverse of a definable bijection is always definable.
    \qed\end{proof}
	
	The notion of a dimension for definable sets originates, as far as we are aware, from \cite{cluckers}, where it is shown that this notion is well-defined and the definition indeed assigns the unique dimension to each infinite definable set:
	
	\begin{definition}
        The \emph{dimension} $\dim(A)$ of an infinite definable set $A\subseteq\ZZ^m$ is the unique $k\ge 1$ such that there is a definable bijection between $A$ and $\NN^k.$
	\end{definition}

	It is easy to show it is equal to the largest $m\in\NN$ such that there exists an $m$-dimensional fundamental lattice which is a subset of $A$.
	
	The properties of Presburger dimensions are explored further in \cite[Section 3.1]{pakhomovzapryagaev2020}. We can extend the notion of a dimension to finite sets by thinking of them as sets of dimension $0$.
    
	
	
	In a more general setting, we consider definable \emph{parametric families} of sets.

    \begin{definition}
		For a set $A\subseteq \ZZ^{n+m}$ and $a\in\ZZ^n$, we define the \emph{section} $$A\upharpoonright a=\{b\in\ZZ^m\mid a^\frown b\in A\},$$ where for $(x_1,\ldots,x_n)\in\ZZ^n$ and $(y_1,\ldots,y_m)\in\ZZ^m$ $$(x_1,\ldots,x_n)^\frown (y_1,\ldots,y_m)\eqdef(x_1,\ldots,x_n,y_1,\ldots,y_m)\in\ZZ^{n+m}.$$
	\end{definition}
	
	\begin{definition}\label{prde}
		Given a definable set $P\subseteq \ZZ^n$, we say that a family $\langle A_p\mid p\in P\rangle$ of sets $A_p\subseteq \ZZ^m$ is \emph{(uniformly) definable} if  there is a definable set $A\subseteq P\times \ZZ^m$ such that $A_p=A\upharpoonright p$ for each $p\in P$.
	\end{definition}
	
	Similarly, we may consider parametric definable families of various kinds of objects, such as linear orders, by encoding them as sets. For example, for a definable set $P$, a definable family of linear orders $\langle L_p\mid p\in P\rangle$ on subsets of $\ZZ^m$ is given by two uniformly definable families of sets $\langle <_p\ \subseteq \ZZ^{2m}\mid p\in P\rangle$ and $\langle D_p\subseteq \ZZ^m\mid p\in P\rangle$, where for each $p\in P$,  $<_p\ \subseteq D_p\times D_p$, and $<_p$ is a linear order relation on $D_p$.

        Our standard order on $n$-tuples of integers is the lexicographical ordering: $$(x_0,\ldots,x_{n-1})<_{lex}(y_0,\ldots,y_{n-1})\Lra\exists i< n\:(\forall j<i\:(x_j=y_j)\wedge x_{i}<y_{i}).$$
        For $(x_1,\ldots,x_n)\in\ZZ^n$, the notation $|(x_1,\ldots,x_n)|$ will stand for $|\cdot|_{\infty}$-norm of $(x_1,\ldots,x_n)$, i.e. $$|(x_1,\ldots,x_n)|=\max(|x_1|,\ldots,|x_n|).$$

        Presburger arithmetic has \emph{definable choice functions}. That is, for any definable family $\langle A_p\subseteq \ZZ^n \mid p\in P\rangle$ of non-empty sets there is a definable function $f\colon P\to \ZZ^n$ such that $f(p)\in A_p$. Indeed, we could pick $f$ that maps $p$ to the $<_{lex}$-least element of $\{a\in A_p\mid |a|=m\}$, where $m$ is the least natural such that there exists $a\in A_p$ with $|a|=m$. 

        Similarly, for a definable equivalence relation $\sim$ on a definable set $A$, we are able to construct a definable set of representatives $A'\subseteq A$ such that $A'$ contains exactly one representative from each equivalence class. Specifically, we define $A'$ to comprise those $a\in A$ that for any $a'\in A$ if $a'\sim a$, then $a\le_{lex} a'$.

	\section{Scattered linear orders of finite rank}
	
	
	\begin{definition}
		A linear order $L$ is called \emph{scattered} (\cite[pp.\,32--33]{rosenstein}) if it does not have an infinite dense suborder.
	\end{definition}
	
	\begin{definition}
          Let $L$ be a linear order. We consider the equivalence relation $\simeq_L$ on $L$, where $a\simeq_L b$ if the cardinality of $\{t\in L\mid a<t<b\text{ or }b<t<a\}$ is finite. The \emph{condensation} $\mathsf{Cnd}(L)$ of $L$ is the order $L/{\simeq_L}$. We say that $L$ is an order of (Hausdorff) rank $n$ if the order $\mathsf{Cnd}^n(L)$ is finite, but for any $m<n$, the order $\mathsf{Cnd}^m(L)$ is infinite.
        \end{definition}

        We note that this definition of classes of orders of finite rank clearly agrees with the definition of classes of orders of rank $\alpha$ for ordinals $\alpha$.

        If $L$ is a linear order definable in $(\ZZ,<,+)$, then the relation $\simeq_L$ is Presburger-definable: $$a\simeq_L b\iff \exists n\forall c\in L(a<_L c<_L b\lor b<_L c<_La \to |c|\le n).$$
        Given $L$, we may identify $\mathsf{Cnd}(L)$ with some definable suborder of $L$ resulting from choosing exactly one representative from each $\simeq_L$-equivalence class in a definable manner.
        
        Clearly, for a definable family of linear orders $\langle L_p\mid p\in P\rangle$, the equivalences $\{\simeq_{L_p}\}$ form a definable family of binary relations. Hence, $\langle \mathsf{Cnd}(L_p)\mid p\in P\rangle$ is a definable family of suborders.

	In \cite{pakhomovzapryagaev2020}, we proved the rank condition:
	
	\repeattheorem{ordering}
	
        \begin{corollary}
            In any definable family of linear orders $\langle L_p\mid p\in P\rangle$, the ranks of $\{L_p\}$ are bounded by some $n\in\NN$.
        \end{corollary}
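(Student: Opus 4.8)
The plan is to reduce this uniform statement to the non-uniform \sref{Theorem}{ordering} by absorbing the parameter $p$ into the definition of each individual order. By the definition of a definable family of linear orders given above, there is a single fixed $m$ such that every $L_p$ is a linear order whose domain $D_p\subseteq\ZZ^m$ and whose comparison relation ${<_p}\subseteq\ZZ^{2m}$ arise as sections $D_p=D\upharpoonright p$ and ${<_p}={<}\upharpoonright p$ of two fixed definable sets $D\subseteq P\times\ZZ^m$ and ${<}\subseteq P\times\ZZ^{2m}$.

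First I would fix an arbitrary $p\in P$ and observe that, since $p$ is a concrete tuple of integers and every integer is definable in $(\ZZ,<,+)$ (the constants $0,1$ are definable, and each $k\in\ZZ$ is then definable as an iterated sum of $1$ or its negation), the parameter $p$ may be eliminated from the defining formulas. Concretely, $D_p$ is the set of those $b\in\ZZ^m$ satisfying the formula obtained from a defining formula of $D$ by substituting the definable constants for the coordinates of $p$, and likewise for ${<_p}$. Hence each $L_p$ is a parameter-free definable linear order; as its elements are $m$-tuples of integers, it is $m$-dimensionally definable.

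Applying \sref{Theorem}{ordering} to $L_p$ then gives that $L_p$ is scattered and of Hausdorff rank at most $m$. Since the bound $m$ is the arity of the fixed ambient space and does not depend on the choice of $p$, setting $n=m$ yields a common bound on the ranks of all orders in the family, which is exactly the claim.

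The only step that genuinely requires care is the first one: the passage from uniform definability over $P$ to parameter-free definability of each individual slice $L_p$. This is unproblematic here precisely because $(\ZZ,<,+)$ has no nondefinable elements, so fixing a value of the parameter never moves us outside the class of orders to which \sref{Theorem}{ordering} applies. In a model of $\PrA$ possessing nondefinable elements this reduction would no longer be available, and one would instead have to invoke a genuinely parametric version of the rank bound; but for the standard model this elementary substitution suffices.
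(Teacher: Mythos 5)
Your proof is correct and follows essentially the same route as the paper's one-line argument: the domains of all $L_p$ live in a fixed $\ZZ^m$, so each $L_p$ is $m$-dimensionally definable and Theorem \ref{ordering} gives the uniform bound $n=m$. Your extra care about eliminating the parameter $p$ (using that every element of $(\ZZ,<,+)$ is definable) is a reasonable point that the paper leaves implicit, but it does not change the substance of the argument.
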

        \begin{proof}
          The dimensions of domains of $L_p$ are bounded by some natural $n$, hence, by Theorem \ref{ordering}, the ranks of $L_p$ themselves are bounded by $n$.
        \qed\end{proof}
        
        \section{Piecewise polynomial functions}
    \begin{definition}\label{polynomial}
          Let $J\subseteq \ZZ^n$ be a fundamental lattice generated by vectors $\vec{p}_1,\ldots,\vec{p}_m$ from the origin $\vec{c}$. We call a function $F\colon J\to \ZZ$ \emph{polynomial} if there is a polynomial with rational coefficients $P(x_1,\ldots, x_m)$ such that $F(\vec{c}+\vec{p}_1x_1+\ldots+\vec{p}_mx_m)=P(x_1,\ldots, x_m)$ for all $(x_1,\ldots,x_m)\in\NN^m$.
        \end{definition}
        
        We note that if $F$ is a polynomial function on a fundamental lattice $J$, then the polynomial $P$ as above is uniquely determined. Here it is clearly suffices to prove that for every non-zero polynomial $P\in Q[x_1,\ldots,x_{n}]$ there are naturals $a_1,\ldots,a_n$ such that $P(a_1,\ldots,a_n)\ne 0$. We establish this by induction on $n$. The base case of $n=0$ is trivially true. For $n>0$ we first put $P$ in the form $x_n^0P_0+\ldots +x_n^mP_m$, where $P_0,\ldots,P_m\in Q[x_1,\ldots,x_{n-1}]$. Then we pick a non-zero $P_i$ and by induction assumption find naturals $a_1,\ldots,a_{n-1}$ such that $P_i(a_1,\ldots,a_{n-1})\ne 0$. Finally we note that for at least one of the values $a_n\in\{0,\ldots,m\}$, $P(a_1,\ldots,a_n)\ne 0$, since otherwise the non-zero polynomial of one variable $P(a_1,\ldots,a_{n-1},x)$ would have more roots than its degree.
        
        \begin{definition}\label{piecewise-polynomial}
          Let $A\subseteq \ZZ^n$ be a definable set. We call a function $F\colon A\ra\ZZ$ \emph{piecewise polynomial} if there is a decomposition of $A$ into finitely many fundamental lattices $J_1\sqcup\ldots\sqcup J_k$ such that the restriction of $F$ onto each $J_i$ is a polynomial function. The degree $\degr(f)$ of $F$ is the maximum of degrees of the restrictions.
    \end{definition}

        Just after the proof of \sref{Lemma}{growth} we will show that the degree of a piecewise polynomial $F$ does not depend on the decomposition of $A$ into a disjoin union of the fundamental lattices and thus that the degree of a piecewise polynomial function is well-defined.

        First let us note that the reason we are interested in piecewise polynomial functions is primarily the following technical result from \cite{pakhomovzapryagaev2020}, which in turn was based on a theorem of Blakley \cite{blakley}.
    \begin{lemma}[{\cite[Corollary~5.1]{pakhomovzapryagaev2020}}]\label{card_funct}
        For any definable family of finite sets $$\langle A_p\subseteq \ZZ^n\mid p\in P\rangle$$ the cardinality function $p\mapsto|A_p|$ is piecewise polynomial with degree $\le n$.
    \end{lemma}

    Recall that for functions $f,g\colon \NN\to \NN$ we write $f(x)=O(g(x))$ whenever there exists such a constant $M>0$ that $|f(x)|<Mg(x)$ for all $x\in\NN$; we write $f(x)=o(g(x))$ if $\lim_{x\ra\infty}\frac{f(x)}{g(x)}=0$. We will use Donald~Knuth's notation $f(x)=\Theta(g(x))$ to express that simultaneously $f(x)=O(g(x))$ and $g(x)=O(f(x))$ hold.

    With any piecewise polynomial function $F\colon A\to \ZZ$ we associate the following monotone function $h_F\colon\NN\ra\NN$:
        $$h_F(n)=\max \{|F(a)|\mid |a|\le n\}.$$

        \begin{lemma}\label{composition} Suppose $g\colon A\to B$ is a definable function and $F\colon B\to \ZZ$ is a piecewise polynomial function such that $h_F(x)=O(x^k)$. Then $h_{F\circ g}(x)=O(x^k)$.\end{lemma}
        \begin{proof}Clearly, the function $u\colon \NN\to\NN$, $u\colon x \mapsto \max\{g(y)\mid |y|\le x\}$ is definable. Hence, $u$ is semilinear and is thus bounded from above by some linear function $b x+a$, $a,b\in\NN$. Therefore $h_{F\circ g}(x)\le h_{F}(bx+a)$, and hence $h_{F\circ g}(x)=O(x^k)$.\qed\end{proof}

    \begin{lemma}\label{growth}
        Let $J$ be a fundamental lattice and $F\colon J\ra\ZZ$ be a polynomial function of the degree $k$. Then $h_F(x)=\Theta(x^k)$.
        \end{lemma}
        \begin{proof}
          Consider the polynomial $P(x_1,\ldots,x_n)\in\QQ[x_1,\ldots,x_n]$ of degree $k$ assigned to $F$ in accordance with \sref{Definition}{polynomial}. Clearly, there is a definable function $g\colon \NN^n\to J$ such that $F(a)=P(g(a))$ for all $a\in\NN^n$. In view of \sref{Lemma}{composition}, it suffices to prove that $h_P(x)=\Theta(x^k)$. Trivially, $h_P(x)=O(x^k)$, so it remains to show that $x^k=O(h_P(x))$.

          Consider the polynomial $Q(x_1,\ldots,x_n)$ equal to the sum of all monomials of the degree $k$ that occur in $P$. We fix some point in $\vec{s}\in[0,1]^n$ such that $Q(\vec{s}\,)\ne 0$ (here by $[0,1]$ we mean the interval in $\RR$). Clearly, $Q(x\vec{s}\,)=x^kQ(\vec{s}\,)$ and also $P(x\vec{s}\,)-Q(x\vec{s}\,)=O(x^{k-1})$. Hence, $P(x\vec{s}\,)=Q(\vec{s}\,)\cdot x^k(1+o(1))$.

          For each $i\in\NN$, let $\vec{a}_i\in\NN^n$ be the vector made of the integer parts of the numbers $i\cdot\vec{s}$. Obviously, $P(\vec{a}_x)-P(x\vec{s}\,)=O(x^{k-1})$. Hence $P(\vec{a}_x)=Q(\vec{s}\,)\cdot x^k(1+o(1))$. Since we chose $\vec{s}$ such that $Q(\vec{s}\,)\ne 0$, we have $|P(\vec{a}_x)| = |Q(\vec{s}\,)|\cdot x^k(1+o(1))$.
          
          Finally, as $|\vec{a}_x|\le x$, we obtain $h_{P}(x)\ge  |Q(\vec{s}\,)|\cdot x^k(1+o(1))$ and thus $x^k=O(h_P(x))$. \qed\end{proof}

        Now let us prove that the degree of a piecewise polynomial function is well-defined. Consider a piecewise polynomial function $F\colon A\to \ZZ$ and some decomposition of $A$ into finitely many fundamental lattices $A=J_1\sqcup\ldots\sqcup J_n$ such that the restrictions $F_i$ of $F$ to $J_i$ are polynomial. We see that $h_F(x)=\max_{1\le i\le n}h_{F_i}(x)$. Hence $h_F(x)=\Theta(x^k)$, where $k$ is the maximum of the degrees of all $F_i$. Thus, the degree of $F$ can be recovered from $h_F$ alone and hence does not depend on the choice of the particular decomposition $A=J_1\sqcup\ldots\sqcup J_n$.

        The same reasoning also gives us the following generalization of \sref{Lemma}{growth}:
        
        \begin{lemma}\label{growth2}
        A function $F\colon A\ra\ZZ$ is piecewise polynomial of degree $k$ iff $h_F(x)=\Theta(x^k)$.
    \end{lemma}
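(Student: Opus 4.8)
The plan is to read the biconditional as a statement about a function $F$ that is already known to be piecewise polynomial, and to show that its degree equals $k$ exactly when $h_F(x)=\Theta(x^k)$. The forward implication is essentially the computation carried out just above the statement, whereas the reverse implication will follow by playing the forward implication off against the uniqueness of the order of growth within a $\Theta$-class.

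First I would prove the forward direction. Suppose $F$ has degree $k$, and fix a decomposition $A=J_1\sqcup\cdots\sqcup J_n$ into fundamental lattices on which $F$ restricts to polynomials $F_i$ of degrees $k_i$, so that $k=\max_i k_i$. Since every $a\in A$ with $|a|\le x$ lies in exactly one $J_i$, we have $h_F(x)=\max_{1\le i\le n}h_{F_i}(x)$. By \sref{Lemma}{growth} each $h_{F_i}(x)=\Theta(x^{k_i})$, and taking the maximum of finitely many such functions yields $h_F(x)=\Theta(x^{\max_i k_i})=\Theta(x^k)$: the lower bound comes from the index $i$ with $k_i=k$, and the upper bound from $x^{k_i}\le x^k$ for $x\ge 1$.

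Next I would treat the reverse direction. Assume $F$ is piecewise polynomial and $h_F(x)=\Theta(x^k)$, and let $k'$ be the (well-defined) degree of $F$. By the forward direction just established, $h_F(x)=\Theta(x^{k'})$, hence $\Theta(x^k)=\Theta(x^{k'})$. As $x^k$ and $x^{k'}$ have the same order of growth only when $k=k'$, we conclude $k'=k$, i.e. $F$ has degree $k$.

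I do not expect a genuine obstacle: the analytic content lives entirely in \sref{Lemma}{growth}, and the present statement merely repackages it together with the identity $h_F(x)=\max_i h_{F_i}(x)$ and the uniqueness of the exponent in a $\Theta$-class. The one point requiring care is the interpretation of the biconditional, which is meaningful only for functions $F$ that are piecewise polynomial to begin with, since an arbitrary function with $h_F(x)=\Theta(x^k)$ need not be piecewise polynomial; so I would state explicitly that piecewise polynomiality of $F$ is a standing hypothesis throughout.
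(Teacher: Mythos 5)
Your proposal is correct and follows essentially the same route as the paper: decompose $A$ into fundamental lattices, observe $h_F(x)=\max_i h_{F_i}(x)$, apply \sref{Lemma}{growth} to each piece, and recover $k$ from the uniqueness of the exponent in the $\Theta$-class. Your explicit remark that piecewise polynomiality must be read as a standing hypothesis (since the literal biconditional would be false for arbitrary $F$) matches how the paper itself treats the statement in the preceding paragraph.
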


        The following two lemmas are obtained immediately from \sref{Lemma}{growth2}.
        
    \begin{lemma}\label{bu}
        Suppose $F\colon A\ra\ZZ$ and $G\colon A\ra\ZZ$ are piecewise polynomial functions, such that $|F(x)|\le |G(x)|$, for all $x\in A$. Then $\degr(F)\le\degr(G)$.
    \end{lemma}
    
    \begin{lemma}\label{estimate}
        Suppose $F,G_1,\ldots,G_n$ are piecewise polynomial functions on $A$, such that $|F(x)|\le\max\limits_{1\le i\le n}|G_i(x)|$, for all $x\in A$. Then $\degr(F)\le \max\limits_{1\le i\le n}\degr(G_i)$.
    \end{lemma}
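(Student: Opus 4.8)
The plan is to transfer the pointwise inequality to the associated growth functions $h_F$ and $h_{G_i}$ and then read off the degrees through the asymptotic characterization supplied by \sref{Lemma}{growth2}. Write $K=\max_{1\le i\le n}\degr(G_i)$; the goal is to show $\degr(F)\le K$. This is the natural generalization of \sref{Lemma}{bu} (which is the case $n=1$) from a single dominating function to a finite pointwise maximum of dominating functions.

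First I would record that the hypothesis $F(x)\le\max_{1\le i\le n}G_i(x)$ for all $x\in A$ lifts directly to the functions $h_F,h_{G_1},\ldots,h_{G_n}\colon\NN\to\NN$. Indeed, for each $x\in\NN$,
$$h_F(x)=\max\{F(a)\mid |a|\le x\}\le \max\{\max_{1\le i\le n}G_i(a)\mid |a|\le x\}=\max_{1\le i\le n}h_{G_i}(x),$$
where the last equality is merely an interchange of two finitary maxima.

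Next, by \sref{Lemma}{growth2} each $G_i$, being piecewise polynomial of degree $\degr(G_i)$, satisfies $h_{G_i}(x)=\Theta(x^{\degr(G_i)})$, and in particular $h_{G_i}(x)=O(x^{K})$. Since a maximum of finitely many functions each of which is $O(x^K)$ is again $O(x^K)$, the displayed inequality yields $h_F(x)=O(x^K)$. Applying \sref{Lemma}{growth2} now to $F$: if $\degr(F)=k$ then $h_F(x)=\Theta(x^k)$, whence $x^k=O(h_F(x))=O(x^K)$, which forces $k\le K$. This is exactly $\degr(F)\le\max_{1\le i\le n}\degr(G_i)$.

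I expect no genuine obstacle here, since the statement is a short consequence of the $\Theta$-characterization of degree already established in \sref{Lemma}{growth2}. The only points requiring a moment's care are the interchange of maxima that produces the inequality $h_F(x)\le\max_{1\le i\le n}h_{G_i}(x)$, and the elementary observation that the class of functions bounded by $O(x^K)$ is closed under the formation of finite maxima; both are entirely routine.
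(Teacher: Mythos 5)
Your proof is correct and follows exactly the route the paper intends: the paper derives this lemma ``immediately from Lemma~\ref{growth2}'', and your argument is precisely the spelled-out version of that derivation (pass to $h_F\le\max_i h_{G_i}$, bound by $O(x^K)$, and compare with $h_F(x)=\Theta(x^{\degr(F)})$). No issues.
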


        Combining \sref{Lemma}{composition} with  \sref{Lemma}{growth2}, we obtain another result:
        \begin{lemma}\label{composition2} For any definable function $g\colon A\to B$ and piecewise polynomial $F\colon B\to \ZZ$, the degree of $F\circ g$ is less than or equal to the degree of $F$.\end{lemma}

        We shall also prove a useful
        
        \begin{lemma}\label{difference} Suppose $F\colon J\to \ZZ$ is a polynomial function of degree $k$ and $g_1,g_2\colon A\to J$ are definable functions such that for $x\in A$, a bound $|g_1(x)-g_2(x)|\le N$ holds for some positive $N$. Then $G(x)=F(g_1(x))-F(g_2(x))$ is a piecewise polynomial function with degree $<k$.\end{lemma}
        \begin{proof}
            The fact that the $G(x)$ is piecewise polynomial is trivial. In light of \sref{Lemma}{growth2} it remains to verify that $h_G(x)=O(|x|^{k-1})$.
            
            As in \sref{Definition}{polynomial}, we consider polynomial $P$ such that $F(\vec{c}+\vec{p}_1x_1+\ldots+\vec{p}_mx_m)=P(x_1,\ldots, x_m)$ for all $(x_1,\ldots,x_m)\in\NN^m$, where $\vec{p}_1,\ldots\vec{p}_m$ are the generating vectors of $J$. Naturally, $P$ is of degree $k$. Next we find definable functions $f_1,f_2\colon A\to \NN^m$ that are produced from $g_1$ and $g_2$ by mapping to the internal coordinates of $J$. Thus $G(x)= P(f_1(x))-P(f_2(x))$, and for $x\in A$ there is a bound $|f_1(x)-f_2(x)|\le N'$, where $N'$ is some fixed number.

           Since both functions $f_1$ and $f_2$ are definable, we have $|f_1(x)|=O(|x|)$ and $|f_2(x)|=O(|x|)$. Since  $|f_1(x)-f_2(x)|\le N'$, the differences $f_1(x)-f_2(x)$ take only finitely many possible values $d\in D$. Hence it suffices to show that for each fixed $d\in \ZZ^n$ we have $P(y)-P(y+d)=O(|y|^{k-1})$, where $y$ ranges over $\ZZ^m$. But it is a standard fact that $P(y)-P(y+d)$ is a polynomial of degree $\le k-1$, which completes the proof.
        \end{proof}

	\section{Definable connectedness}

\begin{definition}
    We say that a set $A\subseteq \ZZ^n$ is \emph{$N$-connected}, for a positive integer $N$, if for any two points $a,b\in A$ there exists a finite sequence $\langle c_i\in A\mid 0\le i\le k\rangle$ such that $c_0=a$, $c_k=b$, and $|c_{i+1}-c_i|\le N$ for each $i$, $0\le i<k$.
\end{definition}

We shall refer to such a sequence $\{c_i\}$ as \emph{$N$-bounded}.

\begin{definition}
    We say that a definable family of sets $\langle A_p\mid p\in P\rangle $ is \emph{definably $N$-connected} if there exists a definable family of sequences $\langle \langle c^{a,b,p}_i\in A_p \mid 0\le  i\le k^{a,b,p}\rangle \mid p\in P,\ a,b\in A_p\rangle$ such that for each $p\in P$ and $a,b\in A_p$ we have $c^{a,b,p}_0=a$, $c^{a,b,p}_{k^{a,b,p}}=b$, and $|c^{a,b,p}_{i+1}-c^{a,b,p}_i|\le N$ for each $i$, $0\le i<k^{a,b,p}$.
\end{definition}

The goal of this section is to prove the following result:
\begin{theorem}\label{connect}
    Suppose $\langle A_p \mid p\in P\rangle$ is a definable family of sets such that each individual $A_p$ is $N$-connected for the same value of $N$. Then, for a suitable positive integer $N'$, the family $\langle A_p \mid p\in P\rangle$ is definably $N'$-connected.
\end{theorem}

The key lemma that we are going to prove is:
\begin{lemma}\label{defconn}
   For any fundamental lattice $J\subseteq \ZZ^{m+n}$, the family of sections $\langle J\upharpoonright p \mid p\in \ZZ^m\rangle$ is definably $N$-connected for a sufficiently large $N$.
\end{lemma}


    In order to obtain the result of this lemma, we introduce the following technical definition.
    
	\begin{definition}
		Let $J\subseteq\ZZ^q$ be a fundamental lattice generated by the vectors $v_1,\ldots,v_l$ from the origin $r$ and $k$ a natural number, $0\le k\le q$. We define a partial function $\pi_k^J\colon J\times \ZZ^k\ra J$ that we shall call \emph{the definable projection of $a\in J$ onto $J\upharpoonright p$} as follows. To determine the value of $\pi_k^J(a,p)$, we consider all possible vectors of the form $$r+x_1\cdot v_1+\ldots+x_l\cdot v_l\in J\upharpoonright p\text{, where }x_1,\ldots,x_l\in\NN.$$ We call vectors of this form the \emph{candidates} for the value of $\pi_k^J(a,p)$. The value of $\pi_k^J(a,p)$ is the candidate for which the vector $(x_1,\ldots,x_l)$ is lexicographically minimal. If there are no candidates, $\pi_k^J(a,p)$ is undefined.
	\end{definition}
	
	Informally, we consider the coordinates of the candidates \emph{as seen from $a$} instead of the true origin $r$ of $J$ (maybe negative) and pick the point for which the relative coordinates are both non-negative and lexicographically minimal among all non-negative coordinates. According to \sref{Lemma}{expressive}, the property of being a candidate for $\pi_k^J(a,p)$  is definable. Hence, the existence of $\pi_k^J$ and its value whenever it exists are definable.

	\begin{lemma}\label{Nacc}
		For any fundamental lattice $J$, there is a sufficiently large $N$ such that for each generating vector $v_i$ of $J$, $a\in J$, and $p\in \ZZ^k$, if both $\pi_k^J(a,p)$ and $\pi_k^J(a+v_i,p)$ are defined, then $|\pi_k^J(a,p)-\pi_k^J(a+v_i,p)|\le N$.
	\end{lemma}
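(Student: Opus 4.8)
The plan is to reduce the statement to a purely lattice-theoretic fact about lexicographically minimal solutions, and then to exploit the minimality of the base point together with conformality.

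Write the generators of $J$ as $v_1,\ldots,v_l$ and let $M$ be the $k\times l$ integer matrix whose $j$-th column consists of the first $k$ coordinates of $v_j$; set $\Lambda_0=\{x\in\ZZ^l\mid Mx=0\}$. For $a\in J$ and $p\in\ZZ^k$, a vector $x\in\NN^l$ is a candidate for $\pi_k^J(a,p)$ exactly when $Mx=b$, where $b=p-(\text{first }k\text{ coordinates of }a)$; thus $\pi_k^J(a,p)=a+\sum_j z_jv_j$ with $z=\mathrm{lexmin}\,S$ and $S=\{x\in\NN^l\mid Mx=b\}$. Writing the candidates for $\pi_k^J(a+v_i,p)$ as offsets from $a$ shows that, up to the harmless lex-preserving shift by $e_i$, they are precisely the candidates for $\pi_k^J(a,p)$ whose $i$-th offset is $\ge 1$; hence $\pi_k^J(a+v_i,p)=a+\sum_j z'_jv_j$ with $z'=\mathrm{lexmin}(S\cap\{x_i\ge 1\})$. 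Since each $|v_j|$ is bounded by a constant of $J$, it suffices to bound $|z-z'|$. If $z_i\ge 1$ then $z\in S\cap\{x_i\ge 1\}$, forcing $z'=z$; so we may assume $z_i=0$.

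Set $u=z'-z\in\Lambda_0$. Because the shift $x\mapsto x+z$ is lex-preserving and $z_i=0$, the vector $u$ is the lex-least element of $T=\{v\in\Lambda_0\mid z+v\ge 0,\ v_i\ge 1\}$, which is nonempty exactly because $\pi_k^J(a+v_i,p)$ is defined. Two facts drive the argument. First, minimality of $z$ gives: whenever $v\in\Lambda_0$ and $z+v\ge 0$, we have $z+v\in S$, so $z+v\ge_{lex}z$, i.e. $v\ge_{lex}0$. Second, if $v$ is \emph{conformal} to $u$ (same sign in each coordinate and $|v_j|\le|u_j|$), then a short case check on the sign of $u_j$ shows $z+v\ge 0$; combining with the first fact, every vector conformal to $u$ is $\ge_{lex}0$, and every nonzero one is $>_{lex}0$.

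The crux is to show that $u$ is \emph{conformally minimal} in $\Lambda_0\setminus\{0\}$, i.e. an element of the \emph{Graver basis} $\mathcal G(\Lambda_0)$. Suppose not: then $u=w+(u-w)$ with $0\ne w\ne u$ conformal to $u$ and $|w_j|\le|u_j|$, so that $u-w$ is nonzero and also conformal to $u$; by the previous paragraph both $w$ and $u-w$ are $>_{lex}0$ and satisfy $z+w\ge 0$ and $z+(u-w)\ge 0$. As $u_i\ge 1$ and the nonnegative numbers $w_i,(u-w)_i$ add up to $u_i$, at least one remainder has $i$-th coordinate $\ge 1$; subtracting from $u$ the part whose complement keeps the $i$-th coordinate $\ge 1$ yields an element of $T$ that is strictly $<_{lex}u$, contradicting lex-minimality of $u$ in $T$. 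Therefore $u\in\mathcal G(\Lambda_0)$. Since the Graver basis is finite (Dickson's lemma, applied orthant by orthant) and depends only on $\Lambda_0$ — hence only on $J$ and $k$, of which there are finitely many choices $0\le k\le q$ — we may take $N=\big(\max_{g\in\mathcal G(\Lambda_0),\,0\le k\le q}|g|\big)\sum_j|v_j|$, giving $|\pi_k^J(a,p)-\pi_k^J(a+v_i,p)|=\big|\sum_j u_jv_j\big|\le N$.

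The main obstacle is that lexicographic minimization is globally discontinuous, so a priori $z'$ could sit arbitrarily far from $z$; the whole weight of the proof rests on showing the displacement $u=z'-z$ cannot be split, i.e. is conformally minimal. This is exactly where lex-minimality of the base point $z$ is indispensable: it forces every conformal piece of $u$ to be lex-nonnegative, which is what lets the subtraction step strictly decrease the lex value and collapse $u$ to a single Graver vector.
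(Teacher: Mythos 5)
Your proof is correct, and its engine is the same exchange argument as the paper's: assume the displacement between the two projections can be split into two nonzero conformal pieces, show that swapping a piece between the two lex-minimal candidates produces valid candidates again (the sign/magnitude case analysis guaranteeing non-negativity of coordinates), and conclude that one of the two projections was not lex-minimal. Where you genuinely diverge is in how the finite set of admissible displacements is produced. You first reduce cleanly to a statement about lex-minimal non-negative solutions of $Mx=b$, identify the displacement $u=z'-z$ as a conformally minimal element of the kernel lattice $\Lambda_0$, and invoke finiteness of the Graver basis via Dickson's lemma; the paper instead decomposes the definable set $U$ of possible displacements into finitely many sign-conserving fundamental lattices and shows the displacement must equal one of their origins $r_j$, which yields the explicit bound $N=\max_j|r_j\vec v+v_i|$. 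Your route is shorter and connects the lemma to standard integer-programming machinery, at the cost of importing an external finiteness theorem; the paper's stays entirely within the semilinear-set technology already set up and keeps the witness for $N$ definable. One cosmetic point: in the step ``every vector conformal to $u$ is $\ge_{lex}0$'' you need the vector to lie in $\Lambda_0$ for the first fact to apply, which does hold for $w$ and $u-w$ in your decomposition, so nothing breaks; it would just be worth saying explicitly.
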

	\begin{proof}
		It suffices to find such $N$ for a fixed generating vector $v_i$ and then take the maximum over $i$.
		
		We consider the (nonempty) set $U$ of all vectors $(u_1,\ldots,u_l)\in\ZZ^l$ such that the first $k$ coordinates of the vector $u_1 v_1+\ldots+u_l v_l+1\cdot v_i$ are equal to $0$.
        
        We split $U$ into fundamental lattices $U_1,\ldots,U_t$ in such a way that each lattice preserves all coordinate signs, that is, if for some $(u_1,\ldots,u_l)\in U_j$ some particular $u_c$ is positive (zero, negative), then for all elements of $U_j$ the corresponding coordinate is positive (zero, negative). In order to achieve that, we split $U$ definably into its subsets for each possible combination of signs first, and then divide each of those subsets individually into fundamental lattices.
        
        Let $r_j=(r_{j,1},\ldots,r_{j,l_{j}})$ be the origins of the individual lattices $U_j$. For a vector $d=(d_1,\ldots,d_{l_j})$, we will use $d\vec{v}$ as a shorthand for $d_1 v_1+\ldots+d_{l_{j}} v_{l_{j}}$.
		
		We claim that $N:=\max\limits_{1\le j\le t}|r_j\vec{v}+v_i|$ suffices for our purpose. Consider some $a,p$ such that both $\pi_k^J(a,p)$ and $\pi_k^J(a+v_i,p)$ are defined and let $s\in \NN^l$ be such that $\pi_k^J(a,p)=a+s\vec{v}$. Observe that  $\pi_k^J(a+v_i,p)=\pi_k^J(a,p)+d\vec{v}+v_i=a+s\vec{v}+d\vec{v}+v_i$, for some $d\in U$.

        If we now show that $d=r_j$ for some $j$, we are done. Assume for a contradiction that $d$ belongs to a particular $U_j$, but $d\neq r_j$.
		
		The vectors $d$ and $r_j$ lie in the same $U_j$, and thus all pairs of matching coordinates $d_l$ and $r_{j,l}$ share the same sign. Also, as $r_j$ is the origin of $U_j$, $|d_l|\ge |r_{j,l}|$ for each $l$: if, for example, $0<d_l<r_{j,l}$, then $r_j+k\cdot(d-r_j)\in U_j$ would have a negative $l$th coordinate $r_{j,l}+k(d_l-r_{j,l})$ for a sufficiently large $k$, contradicting the sign retention property of $U_j$.
		
		Now we prove the following: 
        
        \begin{claim}
            \begin{enumerate}
                \item $a+(s+d-r_j)\vec{v}=\pi_k^J(a,p)+d\vec{v}-r_j\vec{v}$ is a candidate for the value of $\pi_k^J(a,p)$,
                \item $a+v_i+(s+r_j)\vec{v}=\pi_k^J(a+v_i,p)-d\vec{v}+r_j\vec{v}$ is a candidate for the value of $\pi_k^J(a+v_i,p)$.
            \end{enumerate}
        \end{claim}
        \begin{proof}
            As $d$ and $r_j$ both belong to $U$, the vector $d\vec{v}-r_j\vec{v}$ lies in $J\upharpoonright\vec{0}$. Hence, both $\pi_k^J(a,p)+(d-r_j)\vec{v}$ and $\pi_k^J(a+v_i,p)-(d-r_j)\vec{v}$ belong to $J\upharpoonright p$. It remains to check that the relative coordinates of these points as seen from $a$ and $a+v_i$ respectively are non-negative.

            The coordinates $s$ are non-negative, as they are the relative coordinates of $\pi_k^J(a,p)$ seen from $a$. Similarly, as $\pi_k^J(a+v_i,p)=a+v_i+s\vec{v}+d\vec{v}$ is the projection of $a+v_i$, the relative coordinates $s+d$ of $\pi_k^J(a+v_i,p)$ as seen from $a+v_i$ are non-negative.

            First, we check the coordinates of $\pi_k^J(a,p)+(d-r_j)\vec{v}=a+(s+d-r_j)\vec{v}$ as seen from $a$, namely, the values of $s+d-r_j$. Indeed: consider a particular coordinate $s_l+d_l-r_{j,l}$. Exactly one of the following holds:

            \begin{enumerate}
			\item $d_l>0$. In this case, $0<r_{j,l}\le d_l$, hence $0\le s_l\le s_l+d_l-r_{j,l}$;
			\item $d_l<0$. In this case, $d_l\le r_{j,l}<0$, hence $-1\le s_l+d_l<s_l+d_l-r_{j,l}$;
			\item $d_l=0$. In this case, $r_{j,l}=0$, and $0\le s_l=s_l+d_l-r_{j,l}$.
		\end{enumerate}

        In each of these options, $s_l+d_l-r_{j,l}\ge 0$. This shows that $a+(s+d-r_j)\vec{v}$ is a valid candidate for $\pi_k^J(a,p)$.

        Similarly, in order to prove $a+v_i+(s+r_j)\vec{v}$ is a candidate for $\pi_k^J(a+v_i,p)$, we need to show that the coordinates $v_i+(s+r_j)\vec{v}$ are a non-negative linear combination of the vectors in $\vec{v}$. We shall do it separately depending on whether $l=i$.
        
        First, let us fix some specific $l$-th coordinate $l\ne i$. Exactly one of the following possibilities holds for $s_l+r_{j,l}$:

        \begin{enumerate}
			\item $d_l>0$. In this case, $0<r_{j,l}\le d_l$, hence $0\le s_l<s_l+r_{j,l}$;
			\item $d_l<0$. In this case, $d_l\le r_{j,l}<0$, hence $0\le s_l+d_l\le s_l+r_{j,l}$;
			\item $d_l=0$. In this case, $r_{j,l}=0$, and $0\le s_l=s_l+r_{j,l}$.
        \end{enumerate}

        In each of these options, $s_l+r_{j,l}\ge 0$. Now we consider the $i$-th coordinate, which equals $s_i+r_{j,i}+1$. We have the following possibilities:
        
                \begin{enumerate}
			\item $d_i>0$. In this case, $0<r_{j,i}\le d_i$, hence $0\le s_i<s_i+r_{j,i}+1$;
			\item $d_i<0$. In this case, $d_i\le r_{j,i}<0$, hence $0\le s_i+d_i+1\le s_i+r_{j,i}+1$;
			\item $d_i=0$. In this case, $r_{j,i}=0$, and $0\le s_i<s_i+r_{j,i}+1$.
                \end{enumerate}

                In each of these options, $s_i+r_{j,i}+1\ge 0$. Thus, $a+v_i+(s+r_j)\vec{v}$ is a valid candidate for $\pi_k^J(a+v_i,p)$.
        \qed\end{proof}
        It remains to see that either 

        \begin{displayquote}
			$a+(s+d-r_j)v$ is lexicographically smaller than $a+s\vec{v}$ in the ordering among the candidates for $\pi_k^J(a,p)$,
		\end{displayquote}
        \noindent
        or
        \begin{displayquote}
            $a+v_i+(s+r_j)\vec{v}$ is lexicographically smaller than $a+v_i+(s+d)\vec{v}$ in the ordering of the candidates for $\pi_k^J(a+v_i,p)$,
        \end{displayquote}
        \noindent
        but not simultaneously. This is due to the fact that the differences between elements of the compared pairs are $(d-r_j)\vec{v}$ and $-(d-r_j)\vec{v}$. Thus, when we consider the lexicographic ordering, the results of the comparison should be mutually opposite. 
        
        Hence, at least one of the values $\pi_k^J(a,p)$ or $\pi_k^J(a+v_i,p)$ was not the lexicographically minimal candidate, as mandated by the definition of $\pi_k^J(-,p)$, contradiction.   
	\qed\end{proof}	

    \emph{Proof of \sref{Lemma}{defconn}.} Let $x$ and $y$ be two arbitrary points of $J$, let $a$ be the origin of $J$, and let $v_1,\ldots,v_n$  be its generating vectors. It is easy to define a sequence of points in $J$ that connects $a$ to $x$ by steps along the generating vectors: for $x=a+x_1 v_1+\ldots+x_n v_n$, the sequence is $$\begin{aligned}a,\;\; a+v_1,\;\; a+2v_1,\ldots,\;\;a+x_1 v_1,\;\; a+x_1 v_1+v_2,\ldots,\;\;a+x_1v_1+x_2v_2,\\\ldots,\\a+x_1v_1+\ldots+x_{n-1}v_{n-1}+v_n,\ldots,\;\; a+x_1v_1+\ldots+x_nv_n.\end{aligned}$$ Similarly, we define a path from $a$ to $y$. By reversing the first path and concatenating the two, we produce a path from $x$ to $y$ by single steps along the generating vectors of $J$.
			
	We note that, from the definition of $\pi_k^{J}$, for each element $r$ along this path, the value of $\pi_k^{J}(r,p)$ is defined. Indeed, $x$ is a valid candidate for the value of $\pi_k^{J}(r,p)$ for each $r$ along the path from $x$ to $a$, and $y$ is a valid candidate for the value of $\pi_k^{J}(r,p)$ for each $r$ along the path from $a$ to $y$.
			
	By replacing each point $r$ with its own projection $\pi_k^{J}(r,p)$, we construct a new path connecting $x$ and $y$ that now lies fully in $J\upharpoonright p$. Applying \sref{Lemma}{Nacc}, we find a large $N$ (not dependent on $p$) such that the distances between each two sequential points on this path are bounded by $N$. This is due to the fact that, before applying the projections, each step followed some generating vector of $J$.\qed

    \emph{Proof of \sref{Theorem}{connect}.} Let $A^1\sqcup A^2\sqcup\ldots\sqcup A^s$ be a decomposition of $\langle A_p\mid p\in P\rangle$ (treated as a definable subset of $P\times\ZZ^n$) into fundamental lattices. According to \sref{Lemma}{defconn}, each family $\langle A^i \upharpoonright p \mid p\in P\rangle$, $1\le i\le s$, is definably $N_i$-connected for some $N_i$. We set $N'=\max\{N_1,N_2,\ldots,N_s,N\}$, which is sufficient, and show that the family $\langle A_p \mid p\in P\rangle$ is definably $N'$-connected.

    Let $p\in P$ be fixed, and let $a,b\in A_p$ be two arbitrary points. Assume further that $a$ and $b$ lie in sections $A^k\upharpoonright p$ and $A^l\upharpoonright p$, respectively, $1\le k,l\le s$. We are going to define a path from $a$ to $b$ so that the distances between the neighboring points are bounded by $N$. 

    We call a pair of sections $A^u\upharpoonright p$, $A^v\upharpoonright p$ $N$-neighboring if there exists a pair of points $c^{u,v}_p\in A^u\upharpoonright p$ and $d^{u,v}_p\in A^v\upharpoonright p$ such that $|c^{u,v}_p-d^{u,v}_p|\le N$. The property that, for $p\in P$, the pair of sections $A^u\upharpoonright p$, $A^v\upharpoonright p$ is $N$-neighboring, is clearly definable. Using definable choice, we fix the pairs $c^{u,v}_p,d^{u,v}_p$ for all the cases when the pair of $A^u\upharpoonright p$ and $A^v\upharpoonright p$ is $N$-neighboring. Since $A_p$ is $N$-connected, for each $p$ the graph of $A^i\upharpoonright p$ under $N$ is $N$-connected. Observe that we can pick, in a definable way, a sequence of distinct indices  $k=w_1,w_2,\ldots,w_q=l$ such that $A^{w_i}\upharpoonright p$ and $A^{w_{i+1}}\upharpoonright p$ are $N$-neighboring for $1\le i<q$: here we use the fact that there are only finitely many such sequences possible.

 We construct the definable path from $a\in A^u\upharpoonright p$ to $b\in A^v\upharpoonright p$ by concatenating the following chain of paths: $$a\ra c^{w_1,w_2}_p, d^{w_1, w_2}_p\ra c^{w_2,w_3}_p, d^{w_2,w_3}_p \ra c^{w_3,w_4}_p,\ldots, d^{w_{q-1},w_q}_p\ra b,$$ where the all the individual paths are constructed via an application of \sref{Lemma}{defconn}, while the steps from $c^{w_i,w_{i+1}}_p$ to $d^{w_i,w_{i+1}}_p$ are fixed as above and bounded by $N$.
 
 It remains to notice that the whole construction provides for a definable family of paths.\qed
	
	\section{Embeddability of linear orders}
	
    In this section we prove our main result on the classification of definable linear orders in Presburger arithmetic.	
	\repeattheorem{main}
	
        In order to prove this theorem, we will obtain an even stronger statement:
        
        \begin{theorem}\label{general}For any \emph{definable family} of linear orderings $\langle L_p\mid p\in P\rangle$, there exists a \emph{definable family} of embeddings of linear orderings $\langle f_p\colon L_p\ra \ZZ^n\mid p\in P\rangle$, for some $n$. 
        \end{theorem}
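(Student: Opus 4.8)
The plan is to establish \sref{Theorem}{general} by induction on a uniform bound $N$ for the Hausdorff ranks of the orders in the family, proving the sharper statement that any definable family of linear orders all of rank $\le N$ admits a definable family of order embeddings into the lexicographic order on $\ZZ^{N+1}$; \sref{Theorem}{main} then follows by taking $P$ to be a one-point set. By \sref{Theorem}{ordering} (and the corollary following it) there is indeed a single $N$ bounding all the ranks of the $L_p$, $p\in P$, so such an induction is legitimate. The guiding idea is that in a lexicographic embedding the first $N$ coordinates should record the position of the $\simeq_{L_p}$-class of $x$, handled inductively through the condensation, while the final coordinate records the position of $x$ inside its own class.

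For the base case $N=0$ every $L_p$ is finite, and I would send $x\in L_p$ to its number of predecessors, $f_p(x)=|\{y\in L_p\mid y<_{L_p}x\}|$. This is the cardinality function of a definable family of finite sets, hence definable by \sref{Lemma}{card_funct}, and it is visibly a strictly increasing injection into $\NN\subseteq\ZZ=\ZZ^{1}$. For the inductive step, recall that the family of condensations $\langle\mathsf{Cnd}(L_p)\mid p\in P\rangle$ is again definable and that its ranks are bounded by $N-1$, so the induction hypothesis supplies a definable family of order embeddings $g_p\colon\mathsf{Cnd}(L_p)\to\ZZ^{N}$. Using definable choice of representatives I would identify $\mathsf{Cnd}(L_p)$ with a definable set of representatives and write $c_p(x)$ for the representative of the $\simeq_{L_p}$-class of $x$. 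Within one class any two elements have only finitely many elements between them, so the class order-embeds into $\ZZ$ by a signed count from its representative: set $h_p(x)=|\{y\mid y\simeq_{L_p}x,\ c_p(x)\le_{L_p}y<_{L_p}x\}|$ when $c_p(x)\le_{L_p}x$, and $h_p(x)=-|\{y\mid y\simeq_{L_p}x,\ x<_{L_p}y\le_{L_p}c_p(x)\}|$ otherwise. Each of these sets is finite, so by \sref{Lemma}{card_funct} the map $h_p$ is definable uniformly in $p$ and $x$; it vanishes at the representative and is strictly $<_{L_p}$-increasing along each class. I would then put
$$f_p(x)=\big(g_p(c_p(x)),\,h_p(x)\big)\in\ZZ^{N+1}.$$

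It remains to verify that each $f_p$ is an order embedding for $<_{lex}$, which I would do by cases on $x<_{L_p}y$. If $x$ and $y$ lie in distinct classes, then $c_p(x)<_{L_p}c_p(y)$ are representatives of distinct classes, so $g_p(c_p(x))<_{lex}g_p(c_p(y))$ and the first $N$ coordinates already witness $f_p(x)<_{lex}f_p(y)$, irrespective of the last one. If $x$ and $y$ lie in the same class, the first block agrees while $h_p(x)<h_p(y)$. Injectivity follows the same way: equal images force equal first blocks, hence a common class, hence equal values of the injective $h_p$. Since $g_p$, $c_p$ and $h_p$ are all definable uniformly in $p$, the family $\langle f_p\rangle$ is definable, completing the induction.

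The main obstacle, and the only place where genuine content beyond bookkeeping enters, is guaranteeing that the within-class counting function $h_p$ is simultaneously definable and uniform across all classes and all parameters at once: a priori it is only a fiberwise-finite count whose growth and definability are unclear. This is exactly what the piecewise-polynomiality of cardinality functions in \sref{Lemma}{card_funct} secures, reducing the whole construction to the purely combinatorial case analysis above. Everything else—the uniform rank bound, the definability of the condensation family, and the preservation of order under the lexicographic pairing—is routine given the machinery already assembled.
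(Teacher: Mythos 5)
Your outer skeleton (induction on the uniform rank bound, passing to the condensation, then handling the classes) matches the paper's Lemma \ref{3}, but the inner step on which everything rests is wrong. You define the embedding of a finite order (and likewise the within-class map $h_p$) as a predecessor-counting function and justify its definability by \sref{Lemma}{card_funct}. That lemma only says the cardinality function $p^\frown x\mapsto|\{y\mid y<_{L_p}x\}|$ is \emph{piecewise polynomial}; it does not say it is definable. By \sref{Theorem}{jetz} the definable functions are exactly the piecewise \emph{linear} ones, so a counting function of degree $\ge 2$ is not definable in $(\ZZ,<,+)$. Such degrees do occur: take the definable copy of $\omega$ on $\NN^2$ ordered by $\max$ and then lexicographically; the number of predecessors of $(a,b)$ grows like $\max(a,b)^2$, so your $h_p$ is not Presburger-definable even for this rank-one order. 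The same objection kills the base case $N=0$ whenever the finite orders $L_p$ live on domains of dimension $\ge 2$. You flag this as ``the main obstacle'' but then assert that \sref{Lemma}{card_funct} ``secures'' it --- it secures only a growth bound, not definability.

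This gap is exactly what the bulk of the paper's proof exists to close. Its Lemma \ref{1} does \emph{not} count predecessors directly; instead it runs an induction on the \emph{degree} of the piecewise polynomial cardinality function, using the projection operators $\pi^J_k$ of Section 5 to walk along the generating vectors of each lattice piece and extract a definable increasing sequence $a_{p,1}<_{L_p}\ldots<_{L_p}a_{p,o_p}$ cutting $L_p$ into intervals whose cardinality function has strictly smaller degree; only at degree $0$ are the pieces of uniformly bounded size and explicitly embeddable. Lemma \ref{2} then reduces the finite-interval (rank-one) case to Lemma \ref{1} by chopping along the norm $|a|\le s$, paying an extra coordinate, rather than by a signed count into a single copy of $\ZZ$. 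Without a replacement for this machinery your induction does not go through, and as a side effect your claimed target $\ZZ^{N+1}$ is also stronger than what the paper obtains (it gets $\ZZ^{(n+1)^2}$ for an $n$-dimensionally definable order).
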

	\begin{proof}
		First we consider the case where all $L_p$ are finite.
		
		\begin{lemma}\label{1}
			Let $\langle L_p\mid p\in P\rangle$ be a definable family of finite linear orderings. Then for some $n\ge 1$ there is a definable family of embeddings of linear orderings $\langle f_p\colon L_p\ra \ZZ^n\mid p\in P\rangle$.
		\end{lemma}
		\begin{proof}
			By \sref{Lemma}{card_funct}, the function $S\colon p\longmapsto|L_p|$ is piecewise polynomial. We prove the lemma by induction on the degree $m$ of $S$.
			
			If $m=0$, then the range of $S$ is a finite set. Hence, the sizes of all $L_p$ are simultaneously bounded by some fixed natural number, and the maps $f_p\colon L_p\to\ZZ$ can be constructed explicitly. Now we prove the induction step.
			
			We will set up a certain definable family of finite sequences $\langle a_{p,i}\in L_p\mid p\in P, 1\le i\le o_p\rangle$ such that $a_{p,1}$ is the minimum of $L_p$ and $a_{p,o_p}$ is the maximum. The key property of the sequence will be that the degree of the piecewise polynomial function $H(p,i)$, which maps the pair $(p\in P,1<i\le o_p)$ to the number of points between $a_{p,i}$ and $a_{p,i+1}$ (inclusive) within $L_p$, will be strictly less than $m$.

           Having found such a sequence, the rest of the proof will proceed as follows. We consider the subsequence
           $\langle a_{p,i}\in L_p\mid p\in P, i\in I_p\rangle$, where $I_p$ consists precisely of those indices $1\le i\le o_p$ such that $a_{p,j}<_{L_p} a_{p,i}$, for all $1\le j<i$.  We consider the following splitting of $L_p$ into a family of its non-intersecting intervals $S_{p,i}$ indexed by $i\in I_p$:
           \begin{enumerate}
               \item $S_{p,1}$ consists of the single point $a_{p,1}$,
               \item $S_{p,i}$ is the interval $(a_{p,i^-},a_{p,i}]^{L_p}$, where $i^-=\max \{j\in I_p \mid j<i\}$.
           \end{enumerate}
           Clearly, for $p\in P$ and $i\in I_p\setminus \{1\}$, $|S_{p,i}|\le H(i)$. Therefore, following the construction of $H$, we can apply the induction hypothesis to the family of linear orders $\langle S_{p,i} \mid p\in P, i\in I_p\rangle$. 
           In this way we obtain a family of embeddings of linear orders $\langle  f'_{p,i}\colon S_{p,i} \to \ZZ^{n'} \mid p \in P, i\in I_p\rangle$. Then we set $n=n'+1$ and $f_p\colon L_p \to \ZZ^{n}$ to be the functions that, for each $p\in P$ and $i\in I_p$, send given $x\in S_{p,i}$ to  $(i)^\frown f'_{p,i}(x)$.

           In the following we shall construct the definable family of sequences $\langle a_{p,i}\in L_p\mid p\in P, 1\le i\le o_p\rangle$ with the desired properties.

          For all $p$ and $a\in L_p$, we consider the initial segments $$C_{p,a}=\{b\in L_p\mid b<_{L_p}a\}.$$ It is clear that $C_{p,a}$ is a definable family of finite sets and hence the corresponding cardinality function $R\colon p^\frown a\longmapsto|C_{p,a}|$ is piecewise polynomial. The degree $\degr(R)$ does not exceed $m$ by \sref{Lemma}{bu}, since it is bounded by a piecewise polynomial function $p^\frown a\longmapsto w(p)$, which is of degree $\le m$ by \sref{Lemma}{composition2}.
            
			Let $J_1,\ldots, J_k$ be the fundamental lattices such that the restriction of $R$ to each $J_l$ is polynomial.  We will define sequences $a_{p,i}$ in such a way that for each $p\in P$ and $1\le i<o_p$, the elements $a_{p,i}$ and $a_{p,i+1}$ are either in the same $J_l$ or they are neighboring in the order $L_p$. When $a_{p,i}$ and $a_{p,i+1}$ are from the same $J_l$, we shall furthermore guarantee that $|a_{p,i}-a_{p,i+1}|$ is bounded by a positive constant $N_l$ (independent of $p$ and $i$). This entails that, when restricted to such $p$ and $i$, the value $H(p,i)$ (the number of points between $a_{p,i}$ and $a_{p,i+1}$ in the sense of $L_p$) is equal to $1+|R(p\frown a_{p,i+1})-R(p\frown a_{p,i})|$. From \sref{Lemma}{difference}, we can see that the degree of $R(p\frown a_{p,i+1})-R(p\frown a_{p,i})$ is smaller than $m$. Hence the degree of $H$ is also smaller than $m$. From this, the whole function $(p,i)\mapsto |S_{p,i}|$ also has degree $<m$.

        For a given $p\in P$, we construct the sequence $\langle a_{p,i}\in L_p\mid 1\le i\le o_p\rangle$ as follows. We choose a sequence of distinct indices $1\le l_{p,1},\ldots,l_{p,s}\le k$ and elements $b_{p,j},c_{p,j}\in L_p$ for $1\le j\le s$ such that:
        \begin{enumerate}
            \item $b_{p,j}$ is the element of $L_p$ immediately following $\max_{L_p}(\bigcup_{1\le t<j} J_{l_{p,t}}\upharpoonright p)$;
            \item $l_{p,j}$ is the unique index such that $b_{p,j}\in J_{l_{p,j}}\upharpoonright p$;
            \item $c_{p,j}=\max_{L_p}(J_{l_{p,j}}\upharpoonright p)$;
            \item $s=j$ when $c_{p,j}$ is the greatest element of $L_p$.
        \end{enumerate}
        
        Now $\langle a_{p,i}\in L_p\mid, 1\le i\le o_p\rangle$ is the concatenation of the sequences $b_{p,1}\ra c_{p,1},\ldots,b_{p,s}\ra c_{p,s}$, where each individual sequence is obtained as a result of application of \sref{Lemma}{defconn}. The constants $N_{l_j}$ are provided by the statement of \sref{Lemma}{defconn}. We see that the sequence $\langle a_{p,i}\in L_p\mid 1\le i\le o_p\rangle$ obtained in this way has all the required properties: it starts with the minimum of $L_p$, ends with its maximum, and for each  pair of neighboring elements of the sequence they are either neighbors in $L_p$ or they belong to the same $J_j$ and have a distance of at most $N_{j}$.

\end{proof}
		
		\begin{lemma}\label{2}
			Let $L_p$ be a definable parametric family of  linear orderings such that for all $p\in P$, for all $L_p$, for all $a,b\in L_p$, intervals $[a,b]$ are finite. Then there is exists definable parametric family of embeddings $f_p\colon L_p\ra\ZZ^n$.
		\end{lemma}
		\begin{proof}                  
            Since empty orders are embeddable everywhere by the empty embedding, we may assume that the family $L_p$ does not contain empty orders. 

            We will construct uniformly definable sequences $\langle c_{p,s}\in L_p\mid s\in\ZZ\rangle$. We fix a definable choice function $u$ mapping $p$ to a specific element of $L_p$ (say, the lexicographically smallest one) and set $c_{p,0}=u(p)$. For all $s>0$, we define $$c_{p,s}=\max_{L_p}(\{c_{p,0}\}\cup\{a\in L_p\mid|a|\le s\}),$$ while for all $s<0$, we define $$c_{p,s}=\min_{L_p}(\{c_{p,0}\}\cup\{a\in L_p\mid|a|\le s\}).$$

            Clearly, $c_{p,s}$ is a non-decreasing sequence in $L_p$ such that every $a\in L_p$ that is not the maximal element of $L_p$ lies in some $L_p$-interval $[c_{p,s},c_{p,s+1})=R_{p,s}$. If $L_p$ has a maximal element, we add it at the top of the last non-empty $R_{p,s}$. Clearly, $\langle R_{p,s}\mid p\in P, s\in \ZZ\rangle$  is a definable family of finite linear orders.

            We apply \sref{Lemma}{1} to $\langle R_{p,s}\mid p\in P, s\in \ZZ\rangle$ and obtain a uniformly definable family  of embeddings $g_{p,s}\colon R_{p,s}\to \ZZ^{n'}$. Now the statement of the lemma follows for $n=n'+1$ and $f_p\colon  L_p\ra\ZZ^n$ mapping each $a\in R_{p,s}$ to $s^{\frown}g_{p,s}(a)$.
		\qed\end{proof}
		
		\begin{lemma}\label{3}
			Let $L_p$ be a definable parametric family of linear orderings. Then there exists a definable parametric family of embeddings $f_p\colon L_p\ra\ZZ^n$.
		\end{lemma}
		\begin{proof}
			The proof proceeds by induction on the maximal rank of $L_p$ in the family. \sref{Lemma}{2} provides the base case. For the induction step, consider the condensations $\mathsf{Cnd}(L_p)$.
            
            By induction hypothesis, there exists a uniformly definable family of embeddings $g_p\colon \mathsf{Cnd}(L_p)\ra\ZZ^{n_1}$. Now we consider the family of orders $$\langle U_{p,t}\mid p\in P,t\in\mathsf{Cnd}(L_p)\rangle$$ which are restrictions of $L_p$ to elements $a\in L_p$ such that $a\simeq_{L_p}t$. We apply \sref{Lemma}{2} to this family and get a family of embeddings $h_{p,t}\colon U_{p,t}\ra \ZZ^{n_2}$. Now the statement of the lemma follows for $n=n_1+n_2$ and $f_p\colon L_p\to \ZZ^n$ mapping $a\in L_p$ to $g_p(t)^\frown h_{p,t}(a)$, where $t$ is the unique element of $\mathsf{Cnd}(L_p)$ that is $\simeq_{L_p}$-equivalent to $a$.\qed\end{proof}
    
    Combining the results of \sref{Lemma}{1}, \sref{Lemma}{2}, and \sref{Lemma}{3}, we complete the proof of \sref{Theorem}{general}.\qed\end{proof}

        \repeattheorem{non-standard}
        \begin{proof}
          Let $L$ be $k$-dimensionally definable with parameters $\vec{a}\in M^m$ by formulas $\varphi(x_1,\ldots,x_k,\vec{a})$ and $\psi(x_1,\ldots,x_k,y_1,\ldots,y_k,\vec{a})$ defining the domain of the order and the comparison relation, respectively. Consider the formula $\chi(\vec{z})$ such that for $\vec{a}'\in M^m$, the sentence $\chi(\vec{a}')$ expresses: ``formulas $\varphi(x_1,\ldots,x_k,\vec{a}')$ and $\psi(x_1,\ldots,x_k,y_1,\ldots,y_k,\vec{a}')$ define a linear order on $k$-tuples".

          In the standard model $(\ZZ,<,+)$, consider the definable family of linear orders $\langle L_{\vec{a}'}\mid \vec{a}'\in P\rangle$, where $P=\{\vec{a}'\mid (\ZZ,<,+)\models \chi(\vec{a}')\}$ and $L_{\vec{a}'}$ is a linear order defined by  $\varphi(x_1,\ldots,x_k,\vec{a}')$ and $\psi(x_1,\ldots,x_k,y_1,\ldots,y_k,\vec{a}')$. This is indeed a definable family of linear orders since $(\ZZ,<,+)$ and $M$ are elementary equivalent. By \sref{Theorem}{general}, there exists a definable family of embeddings $\langle f_p\colon L_p\to \ZZ^n\mid p\in P\rangle$. Let the formula $\theta(x_1,\ldots,x_k,y_1,\ldots,y_n,\vec{z})$ define this family of embeddings.

          From the elementary equivalence of $(\ZZ,<,+)$ and $M$ we see that the formula $\theta(x_1,\ldots,x_k,y_1,\ldots,y_n,\vec{a})$ gives an $M$-definition of an embedding of $L$ into $M^n$.
        \qed\end{proof}
        
        We note that a definable order of the rank $\le n$ is not necessarily definable $n$-dimensionally. For example, order $$1^{m-1}+\ZZ+2^{m-1}+\ZZ+3^{m-1}+\ZZ+\ldots$$ of rank $2$ is clearly embeddable in $\ZZ^m$. It does not possess any $m'$-dimensional definable isomorphic copy $L$, for $m'<m$.
        
        Indeed, assume for a contradiction that there is an $m'$-dimensional definable isomorphic copy $L$ of the order described above. We fix the definable suborder $S\subseteq\mathsf{Cnd}(L)$ consisting of those elements $s\in\mathsf{Cnd}(L)$ for which the corresponding $\simeq_L$-equivalence class is finite. Next, we consider the definable families of finite sets $\langle I_i\mid i\in \NN\rangle$ and $\langle A_i\mid i\in \NN\rangle$, where $$I_i=\{s\in S \mid |s|\le i\}\text{ and }A_i=\bigcup_{s\in I_i} [s]_{\simeq_L}.$$
        Since $S$ is infinite by Lemma \ref{card_funct}, the function $f\colon i\mapsto |I_i|$ is piecewise polynomial of degree $k\le 1$ and, as $L$ was $m'$-dimensional, by the same lemma the function $g\colon i\mapsto |A_i|$ is piecewise polynomial of degree $l\le m'<m$. Since $f$ is monotone by Lemma \ref{growth2}, we observe that for some $\alpha>0$ and $N$, for all $i\ge N$ we have $f(i)\ge \alpha i$. Now, the union of any $x$ distinct finite $\simeq_L$-equivalence classes has the cardinality at least $1^{m-1}+2^{m-1}+\ldots+x^{m-1}=\Theta(x^m)$. Since for $i\ge N$, $g(i)$ is the cardinality of the union of at least $\alpha i$ distinct finite $\simeq_L$-equivalence classes, we obtain $g(i)\ge \beta i^m$ for some $\beta>0$. Hence  $g$ has degree $\ge m$, contradiction.
        
        However, it is not currently clear whether any $n$-dimensionally definable order can be definably embeddable into $\ZZ^m$, for some $m$.
        
        \begin{question}What is the pointwise minimal function $t\colon \NN\to\NN$ such that, for all $n\in\NN$, any $n$-dimensionally definable order $L$ is embeddable into $\ZZ^{t(n)}$?\end{question}
        
        By examining our proof of \sref{Theorem}{general}, it is easy to see that if an order $L$ is $n$-dimensionally definable in $(\ZZ,<,+)$, then it will be definably embeddable into $\ZZ^{(n+1)^2}$. It is likely that this estimate can be substantially improved.

	\section*{Acknowledgments}
	
	The authors thank Lev~Beklemishev for the suggestion of a research topic and many fruitful discussions.
	
	The work of Fedor Pakhomov is supported by FWO grant G0F8421N. The work of Alexander~Zapryagaev was prepared within the framework of the HSE University Basic Research Program.

    \emergencystretch=2em
	
	\printbibliography[heading=bibintoc,title={References}]

\end{document}